\documentclass[11pt]{amsart}

\usepackage{amsfonts,amssymb,amsmath,amsthm,latexsym,graphics,epsfig}
\usepackage{verbatim,enumerate,array,booktabs,color,bigstrut,prettyref,tikz-cd}
\usepackage{multirow}
\usepackage[all]{xy}
\usepackage{hyperref}
\usepackage{ctable}

\usepackage{mathtools}

\newrefformat{eq}{\textup{(\ref{#1})}}
\newrefformat{prty}{\textup{(\ref{#1})}}

\newtheorem{defn}{Definition}

\newtheorem{lemma}[defn]{Lemma}

\newtheorem{thm}[defn]{Theorem}

\newtheorem{proposition}[defn]{Proposition}

\theoremstyle{definition}

\newcommand{\Q}{\mathbb Q}
\newcommand{\Qbar}{\overline{\Q}}

\newcommand{\modQ}{\,\text{mod}\,(\Q^*)^2}
\newcommand{\modQh}{\,\text{mod}\,(\Q[h])^2}

\newcommand{\eclabel}[1]{\href{https://www.lmfdb.org/EllipticCurve/Q/#1}{\texttt{#1}}}

\author{Enrique Gonz\'alez-Jim\'enez}
\address{ Departamento de Matem\'aticas, Universidad Aut\'onoma de Madrid,Madrid, Spain\,
\\
}
\email{enrique.gonzalez.jimenez@uam.es}
\urladdr{\url{http://www.uam.es/enrique.gonzalez.jimenez}}

\title[Isogeny vs. discriminant]{On the squareness of the discriminant\\  of elliptic curves with an isogeny}

\thanks{The author is supported by Grant PID2022-138916NB-I00 funded by MCIN/AEI/ 10.13039/501100011033 and by ERDF A way of making Europe.}

\begin{document}

\date{\today}
\subjclass{Primary: 11G05, Secondary: 14H52.}
	
	\begin{abstract}
	We establish a classification of the values of \( N \) for which an elliptic curve defined over \( \mathbb{Q} \) with square discriminant admits an \( N \)-isogeny. Furthermore, we determine the values of \( N \) for which two elliptic curves defined over \( \mathbb{Q} \), both possessing square discriminants, are \( N \)-isogenous. In both cases, we explicitly parametrize the corresponding \( j \)-invariants of the elliptic curves associated with these problems.  
	\end{abstract}
	
	\maketitle

	\section{Introduction}
{
Let $E$ be an elliptic curve defined over $\Q$, given by a Weierstrass equation  
\[
E \colon y^2 + a_{1}xy + a_{3}y = x^3 + a_{2}x^2 + a_{4}x + a_{6}.
\]
The discriminant of $E$, denoted $\Delta(E)$, is defined in terms of the coefficients $a_1,a_2,a_3,a_4,a_6$ (see \cite[Chapter~III.1]{silverman}). Any other Weierstrass equation for $E$ can be obtained via the change of variables  
\[
\begin{array}{l@{\,=\,}l}
x & u^2x' + r, \\
y & u^3y' + u^2sx' + t,
\end{array}
\]
with $u,r,s,t \in \Q$ and $u \neq 0$. If $E'$ denotes the curve given by such a transformed equation, then their discriminants satisfy  
\[
\Delta(E) = u^{12}\Delta(E').
\]
Consequently, the property of $\Delta(E)$ being a square in $\Q$ is intrinsic to the elliptic curve $E$ and does not depend on the chosen Weierstrass equation. In other words, the statement $\sqrt{\Delta(E)} \in \Q$ is a property of the curve itself. }

Our goal in this paper is to establish a systematic classification of the conditions under which the discriminant of an elliptic curve within a specific family is a perfect square. Understanding this property is crucial, as it has significant implications for the arithmetic of elliptic curves.

 In \cite{irene}, we addressed this problem by analyzing the structure of the torsion subgroup. In particular, one of the main results in the aforementioned work established that if \( E \) is an elliptic curve defined over \( \mathbb{Q} \) and \( \sqrt{\Delta(E)} \in \mathbb{Q} \), then \( |E(\mathbb{Q})_{\mathrm{tors}}| \in \{1,3\} \) or \( E(\mathbb{Q})_{\mathrm{tors}} \) is non-cyclic. However, the converse holds only when \( E(\mathbb{Q})_{\mathrm{tors}} \) is non-cyclic. In this paper, we focus our attention on elliptic curves defined over $\Q$ that admit (cyclic) isogenies defined over \( \mathbb{Q} \). { Throughout this paper, we adopt the convention that isogenous elliptic curves are regarded as non-isomorphic.}
 
\

Our main results in this paper are the following three theorems.

\begin{thm}\label{mainthm1}	
Let $E$ be an elliptic curve defined over $\Q$ such that $\sqrt{\Delta(E)}\in \Q$. If $E$ admits an $N$-isogeny, then $N\in \{2,3,4,6,7,8\}$. The following table provides a parametrization of the \( j \)-invariant of \( E \):
\begin{center}
\begin{tabular}{cll}
\toprule %
$N$ &   $j(E)$ \\
\toprule %
$2$ & $\frac{256 \left(t^2+t+1\right)^3}{t^2 (t+1)^2}$ \\
\midrule
$3$ & $\frac{\left(t^2+3\right)^3 \left(t^2+27\right)}{t^2}$   \\
\midrule
$4$ & $\frac{256 \left(t^4+4 t^3+5 t^2+2 t+1\right)^3}{t^2 (t+1)^4 (t+2)^2}$  \\
\midrule
$6$ & $\frac{256 \left(t^2+3 t+3\right)^3 \left(t^6+9 t^5+30 t^4+45 t^3+30 t^2+9 t+3\right)^3}{t^2 (t+1)^6 (t+2)^6 (t+3)^2 (2 t+3)^2}$   \\
\midrule
$7$ & $\frac{\left(t^2-t+7\right) \left(t^2+t+7\right) \left(t^4+5 t^2+1\right)^3}{t^2}$  \\
\midrule
$8$ & $\frac{256 \left(t^8+8 t^7+28 t^6+56 t^5+69 t^4+52 t^3+22 t^2+4 t+1\right)^3}{t^2 (t+1)^8 (t+2)^2 \left(t^2+2 t+2\right)^2}$   \\
\bottomrule
\end{tabular}
\end{center}
\end{thm}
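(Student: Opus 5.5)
The plan rests on a simple observation: whether $\Delta(E)$ is a square is determined by the $j$-invariant, up to finitely many exceptions. Since $j=c_4^3/\Delta$ and $j-1728=c_6^2/\Delta$, we have $\Delta\equiv j-1728 \pmod{(\Q^*)^2}$ whenever $j\neq 0,1728$. The cases $j=0,1728$ are handled directly. For $j=1728$ the curves $y^2=x^3+ax$ have $\Delta=-64a^3$, square iff $-a$ is a square. For $j=0$ the curves $y^2=x^3+b$ have $\Delta=-432b^2$, never a square. So the condition $\sqrt{\Delta(E)}\in\Q$ becomes: $j(E)-1728$ is a nonzero square, plus a finite check at $j=1728$.

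Next we invoke the classification of rational cyclic $N$-isogenies (Mazur, Kenku). Either $N\in\{1,\dots,10,12,13,16,18,25\}$, with $X_0(N)$ of genus $0$ and an explicit Hauptmodul $j=J_N(s)$, or $N\in\{11,14,15,17,19,21,27,37,43,67,163\}$, with finitely many rational $j$-invariants.

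For each genus $0$ level $N$ we study the curve
\[
C_N\colon\ w^2=J_N(s)-1728 .
\]
This is the fibre product of $X_0(N)$ with the double cover $X(2)^{\pm}\to X(1)$ defined by $\sqrt{j-1728}$; equivalently, the square-discriminant condition says the $2$-division field lies in an $A_3$-extension. Writing $J_N-1728=c_6(s)^2/\Delta(s)$, we have $C_N\colon w^2=\Delta_N(s)$, and the genus depends only on the squarefree part of the polynomial $\Delta_N(s)$, which is an explicit product of the cusp factors.
\begin{itemize}
\item For $N=2,3,4,6,7,8$ this squarefree part should have degree at most $2$. Then $C_N$ is a conic with a rational point, and parametrizing it and substituting into $J_N$ gives the table. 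For example, for $N=2$ take $j=(s+16)^3/s$; then $j-1728=(s-8)^2(s+64)/s$, which gives $w^2=s(s+64)$, and a rational parametrization yields the stated family. Each table entry is then confirmed by checking that $j(t)-1728$ is a square in $\Q(t)$.
\item For $N=5,9,10,12,13,16,18,25$ I expect $C_N$ to have genus $\ge1$. Genus $1$ cases should be elliptic curves of rank $0$, whose finitely many points one computes, for instance with Magma. Genus $2$ cases need Chabauty or an elliptic-curve quotient of rank $0$. In every case one must show that all rational points are cusps or else give $j\in\{0,1728\}$, and these are excluded by the first step.
\item For the finitely many sporadic $j$-invariants, one checks directly whether $j-1728$ is a square. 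All of them should fail, for example $j=-2^{15}$ at $N=11$, but this must be verified one by one.
\end{itemize}

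Two smaller points remain. It suffices to treat the levels where non-existence is claimed and to confirm the table for the levels where existence is claimed. If $E$ has an $N$-isogeny, it has an $M$-isogeny for every $M\mid N$, so, for example, $N=12$ is ruled out once $C_{12}$ has no points, independently of $N=4,6$. Levels like $N=5$ need their own verification. The expected main obstacle is the higher-genus cases, especially $N=5$, $9$ and $16$, where one must correctly identify the genus and compute the rational points. There the plan is to map $C_N$ to an elliptic curve of conductor dividing a small multiple of $N$ and use that its Mordell--Weil group is finite.
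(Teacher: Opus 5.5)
Your plan is correct and is essentially the paper's argument. You reduce $\sqrt{\Delta(E)}\in\Q$ to $j(E)-1728$ being a square, treating $j=0,1728$ separately, and check the sporadic $j$-invariants directly; for the genus-$0$ levels of $X_0(N)$ you study $C_N\colon w^2=j_N(h)-1728$, parametrize the conics for $N\in\{2,3,4,6,7,8\}$, use rank-$0$ elliptic curves for $N\in\{5,9,10,12,13,16,18\}$, and dispose of $N=25$ through its $5$-isogeny, which your divisibility remark allows.

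Two small corrections are needed.
\begin{itemize}
\item The squarefree part of $j_N-1728$ is supported on the odd-width cusps \emph{and} on the elliptic points of order $2$, not on cusps alone; your own factor $s+64$ for $N=2$ and $h^2+22h+125$ for $N=5$ are examples. As a result no genus-$2$ case occurs, and $C_{25}$ has genus $3$.
\item $j=1728$ is not excluded by your first step, since $y^2=x^3-x$ has $\Delta=64$. This is harmless here only because such curves admit just $2$- and $4$-isogenies, and $j=1728$ already lies in the $N=2$ family at $t=1$.
\end{itemize}
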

\begin{thm}\label{mainthm2}	
Let $E$ and $E'$ be two elliptic curves defined over $\Q$ such that $\sqrt{\Delta(E)},\sqrt{\Delta(E')}\in \Q$. If $E$ and $E'$ are $N$-isogenous, then $N\in \{2,3,4,7\}$. The following table provides a parametrization of the \( j \)-invariants of \( E \) and  $E'$:
\begin{center}
\begin{tabular}{cllll}
\toprule %
$N$ &    $j(E)$ & & &  $j(E')$\\
\toprule
$2$ &  $\frac{256 \left(t^4-t^2+1\right)^3}{(t-1)^2 t^4 (t+1)^2}$ & & & $\frac{16 \left(t^4+14 t^2+1\right)^3}{(t-1)^4 t^2 (t+1)^4}$  \\
\midrule
$3$ & $\frac{\left(t^2+3\right)^3 \left(t^2+27\right)}{t^2}$ & & & $\frac{\left(t^2+27\right) \left(t^2+243\right)^3}{t^6}$  \\
\midrule
$4$ & $\frac{256 \left(t^8-t^4+1\right)^3}{(t-1)^2 t^8 (t+1)^2 \left(t^2+1\right)^2}$ & & & $\frac{4 \left(t^8+60 t^6+134 t^4+60 t^2+1\right)^3}{(t-1)^8 t^2 (t+1)^8 \left(t^2+1\right)^2}$  \\
\midrule
$7$ & $\frac{\left(t^2-t+7\right) \left(t^2+t+7\right) \left(t^4+5 t^2+1\right)^3}{t^2}$ & & & $\frac{\left(t^2-t+7\right) \left(t^2+t+7\right) \left(t^4+245 t^2+2401\right)^3}{t^{14}}$  \\
\bottomrule
\end{tabular}
\end{center}
\end{thm}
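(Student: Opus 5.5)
Throughout, I use the standard fact that for $j\neq 0,1728$ the square class of the discriminant depends only on $j$: for any model,
\[
\Delta(E)\equiv j(E)-1728 \modQ .
\]
This holds because $j-1728=c_6^2/\Delta$, so $\Delta\equiv j-1728$ up to squares. The cases $j=0$ and $j=1728$ are handled separately. For $j=0$ we have $y^2=x^3+k$ with $\Delta=-432k^2$, which is never a square. For $j=1728$ we have $y^2=x^3+kx$ with $\Delta=-64k^3$, a square iff $k=-m^2$. In that case $y^2=x^3-m^2x$ has full $2$-torsion, so it is $2$-isogenous to curves whose discriminants can be checked directly, and it fits the $N=2$ parametrization.

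Theorem~\ref{mainthm2} then becomes a question about rational points. If $E$ and $E'$ are $N$-isogenous, the pair $(j(E),j(E'))$ corresponds to a rational point on $X_0(N)$, with the two $j$-maps $j_N(\tau)$ and $j_N(N\tau)$. We need both $j_N-1728$ and $j'_N-1728$ to be squares. This defines a double cover
\[
C_N\colon\ s^2=j_N-1728,\quad r^2=j'_N-1728
\]
of $X_0(N)$, a $(\mathbb Z/2)^2$-cover. By Mazur and Kenku, a rational $N$-isogeny (non-CM, non-cusp) exists only for $N\le 19$ or $N\in\{21,25,27,37,43,67,163\}$.

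For genus-$0$ levels, take the standard Fricke parametrization $j_N(t)$ of $X_0(N)$. The Atkin–Lehner involution gives $j'_N(t)=j_N(w_N t)$. Write
\[
j_N(t)-1728=\frac{A(t)B(t)^2}{D(t)}
\]
and reduce to square-free parts.
\begin{itemize}
\item If the square-free part of the numerator of $(j_N-1728)(j'_N-1728)$ has small degree, the fibre product is again genus $0$ or splits. This happens for $N=2,3,4,7$, where $j_N-1728$ and $j'_N-1728$ share the same square-free part up to a square (through the $t\mapsto N^k/t$ symmetry). Parametrizing the resulting conic gives the tabulated families.
\item For the remaining genus-$0$ levels ($5,6,8,9,10,12,13,16,18,25$), I would show the curve $s^2=j_N-1728$, $r^2=j'_N-1728$ has positive genus. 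I would then map it to suitable quotients, such as the hyperelliptic curve $w^2=(j_N-1728)(j'_N-1728)\cdot\square$ or its elliptic/genus-$2$ factors, and determine their rational points. For elliptic quotients this means rank $0$ plus torsion; for genus $2$ it means Chabauty or an elliptic Chabauty argument. Finally, I would verify that all rational points are cusps or CM.
\end{itemize}
Because $N$ here must already lie in the set of Theorem~\ref{mainthm1}, only $6$ and $8$ need new work: one can restrict to $N\in\{2,3,4,6,7,8\}$, since squareness of $\Delta(E)$ alone forces that.

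For the finitely many sporadic levels, the $j$-invariants are known explicitly. I would check directly whether $j-1728$ and $j'-1728$ are both squares; they are not. For the positive-genus levels $11,14,15,17,19,21,27$, the finitely many non-cuspidal points are tabulated, and a direct check again suffices.

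The parametrizations themselves come out of this: for $N=3$, $j_3(t)=(t^2+3)^3(t^2+27)/t^2$ already satisfies $j-1728=(\dots)^2$ on the conic model, and the conjugate $j'$ follows from $t\mapsto 27/t$ after substitution.

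The main obstacle I anticipate is the determination of rational points on the positive-genus covers for $N=6$ and $N=8$. There the curve is a genus-$\ge 2$ fibre product. I would first look for an elliptic quotient of rank $0$ via the involutions $(s,r)\mapsto(\pm s,\pm r)$ combined with $w_N$. Only if that fails would I resort to a two-cover descent or Chabauty on a genus-$2$ quotient.
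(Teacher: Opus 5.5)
Your plan is the paper's proof. You reduce squareness of $\Delta$ to squareness of $j-1728$ (Proposition~\ref{prop_delta}) and use Theorem~\ref{mainthm1} to restrict to $N\in\{2,3,4,6,7,8\}$. You then form the fibre product $y^2=F_N(h)$, $z^2=G_N(h)$ over $X_0(N)$, parametrize it for $N=2,3,4,7$, and determine its rational points for $N=6,8$. Three of your concrete predictions are wrong, but none of them breaks the method.

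First, for $N=6,8$ the fibre product has genus $1$, not genus $\ge 2$. Since $G_6=h+9$ and $G_8=h$ are linear, eliminating $h$ gives $y^2=(z^2-1)(z^2-9)$ and $y^2=z^4-16$. These are the rank-$0$ elliptic curves \eclabel{24.a4} and \eclabel{32.a4}. All their rational points lie over the cusps $h\in\{0,-8,-9,\infty\}$ and $h\in\{4,\infty\}$ respectively, so no Chabauty or descent is needed.

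Second, for $N=2,4$ the square classes of $j_N-1728$ and $j'_N-1728$ do not coincide. They are $h(h+64)$ versus $h+64$ (resp.\ $h(h+16)$ versus $h+16$). The fibre product is still rational, because substituting $h=z^2-64$ (resp.\ $h=z^2-16$) leaves the conic $(y/z)^2=z^2-64$ (resp.\ $z^2-16$). Only for $N=3,7$ are both classes equal to $h$, and there the fibre product splits.

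Third, $j=1728$ does not ``fit the $N=2$ parametrization'' of this theorem. The curves $2$-isogenous to $y^2=x^3-m^2x$ are $y^2=x^3+4m^2x$ and $y^2=x^3-11m^2x\pm 14m^3$, with discriminants $-2^{12}m^6$ and $2^9m^6$. So $j=1728$ yields no pair at all; this is exactly the paper's Theorem~\ref{thm_CM}. Correspondingly, neither $j$-map in the $N=2$ row takes the value $1728$ at a rational $t$: it would require $\bigl((t^2-1)/t\bigr)^2\in\{1/2,\,32\}$, which has no rational solution.
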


\noindent {\bf Remark.} 
In the proofs of Theorems \ref{mainthm1} and \ref{mainthm2}, we observe that if \( E \) and \( E' \) are two elliptic curves defined over \( \mathbb{Q} \) that are \( 3 \)-isogenous or \( 7 \)-isogenous, then  
\[
\sqrt{\Delta(E)} \in \mathbb{Q} \quad \text{if and only if} \quad \sqrt{\Delta(E')} \in \mathbb{Q}.
\]

\

The last main result of this article focuses on the case of complex multiplication. This theorem provides a characterization of elliptic curves over \( \mathbb{Q} \) with complex multiplication whose discriminant is a square. In particular, this result asserts that there do not exist two elliptic curves defined over \( \mathbb{Q} \) that are isogenous and have complex multiplication with square discriminants.

\begin{thm}\label{thm_CM}	
Let $E$ be an elliptic curve defined over $\Q$ with complex multiplication. Then $\sqrt{\Delta(E)}\in \Q$ if and only if $E$ is $\Q$-isomorphic to $E_{-t^2}:y^2=x^3-t^2x$ for some $t\in \Q^*$. In particular, if $E'$ is an elliptic curve defined over $\Q$ isogenous to $E$ then $\sqrt{\Delta(E')}\notin \Q$.
\end{thm}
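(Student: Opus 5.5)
We use the classification of elliptic curves over $\Q$ with complex multiplication: $j(E)$ is one of the thirteen CM $j$-invariants
\[
0,\ 1728,\ -3375,\ 8000,\ -32768,\ 54000,\ 287496,\ -884736,\ -12288000,\ 16581375,\ -884736000,\ -147197952000,\ -262537412640768000.
\]
Square discriminant is a property of $E$ up to $\Q$-isomorphism, so we treat three cases: $j=0$, $j=1728$, and $j\neq 0,1728$. Throughout, the key tool is the formula $j-1728 = c_6^2/\Delta$. It shows that when $j\neq 0,1728$, $\Delta$ is a square if and only if $j-1728$ is a nonzero square in $\Q$. Twists by $d$ multiply $\Delta$ by $d^6$, so they do not change this condition.

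For $j=0$, write $E\colon y^2=x^3+b$. Then $\Delta=-432b^2=-3\cdot 12^2 b^2$, which is never a square. For $j=1728$, write $E\colon y^2=x^3+ax$. Then $\Delta=-64a^3$, which is a square if and only if $-a$ is a square, that is, $a=-t^2$. This gives $E\cong E_{-t^2}$, as claimed.

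For the eleven remaining CM $j$-invariants, we check directly that $j-1728$ is not a rational square. Many of these values are negative, so they are immediately excluded: $-3375-1728$, $-32768-1728$, and so on. For the positive ones we factor:
\begin{itemize}
\item $8000-1728=6272=2^7\cdot 7^2$, not a square;
\item $54000-1728=52272=2^4\cdot 3^3\cdot 11^2$, not a square;
\item $287496-1728=285768=2^3\cdot 3^6\cdot 7^2$, not a square;
\item $16581375-1728=16579647=3^{4}\cdot 7^2\cdot 4177$, which is not a square since $4177$ is not a square. Alternatively, the CM field $\Q(\sqrt{-7})$ forces $j-1728=-7\cdot(\text{square})$ up to sign considerations, and it suffices to note the non-square $7$-adic valuation or squarefree part.
\end{itemize}
This completes the first assertion. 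The main computational step is simply checking these finitely many values; a convenient uniform approach is the classical fact that $j-1728 = -D\cdot\square$ for the CM discriminant $D$, with $D\neq -4$ a non-square.

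For the final assertion, suppose $E\cong E_{-t^2}$ and $E'$ is isogenous to $E$ over $\Q$ (and non-isomorphic, by the paper's convention). Then $E'$ has CM by an order in $\Q(i)$, so $j(E')\in\{1728, 287496\}$. For $j(E')=287496$, the previous step shows $\Delta(E')$ is not a square. For $j(E')=1728$, the curve $E'$ would be $y^2=x^3+a'x$, and it remains to show that $-a'$ cannot be a square. Rational isogenies between $j=1728$ curves are $2$-isogenies (possibly composed with twists). The $2$-isogeny from $y^2=x^3+ax$ lands on $y^2=x^3-4ax$. Taking $a=-t^2$ gives $a'=4t^2$, so $-a'=-4t^2$ is not a square.

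The remaining subtle point is that $E'$ is not isomorphic to $E$ over $\Q$. Any other isogenous curve with $j=1728$ is obtained by iterating this $2$-isogeny, which alternates between $a$ and $-4a$ up to fourth powers. Since $16a$ is equivalent to $a$ up to fourth powers, only these two classes occur. The class $-a$ being a square therefore forces $E'\cong E$, which is excluded. This step, controlling the full isogeny class, is the one we expect to need the most care, and we would handle it via the known structure of the $\Q$-isogeny graph of $y^2=x^3+ax$.
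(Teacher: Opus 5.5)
Your strategy is the paper's. You reduce squareness of $\Delta$ to squareness of $j-1728$; your identity $j-1728=c_6^2/\Delta$ is a quicker proof of Proposition~\ref{prop_delta} for $j\neq 1728$ than the twist computation. You then check the thirteen CM $j$-invariants, handle $j=0$ and $j=1728$ explicitly, and control the isogeny class of $E_{-t^2}$. In the last step you use that $E'$ has CM by $\mathbb Z[i]$ or $\mathbb Z[2i]$, so $j(E')\in\{1728,287496\}$. This lets the first part dispose of the $j=287496$ neighbours, instead of writing down $y^2=x^3-11t^2x\pm 14t^3$ and computing $\Delta=2^9t^6$ as the paper does. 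That variation is fine. However, three of your claims are wrong or unjustified and need repair.

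First, $16581375-1728=16579647=3^8\cdot 7\cdot 19^2=7\cdot 1539^2$. Your factorization $3^4\cdot 7^2\cdot 4177$ equals $16578513$ and gets the parity of $v_7$ wrong, so as written that step is unjustified. The correct factorization, with $v_7=1$, still gives a non-square.

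Second, the ``classical fact'' $j-1728=-D\cdot\square$ is false in general. For odd $D$ the sign is reversed, e.g.\ $-3375-1728=-7\cdot 27^2$. For $D=-16$ one has $287496-1728=2\cdot 378^2$, whereas your rule, with $-D=16$ a square, predicts a square. It would therefore wrongly give square discriminant to exactly the curves with $j=287496$ that the second assertion must exclude. Delete it and rely on your direct factorizations, which are correct for $8000$, $54000$ and $287496$.

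Third, in the final step, ``possibly composed with twists'' has no meaning for $\Q$-rational isogenies, since a nontrivial twist is not defined over $\Q$. Also, ``iterating the $2$-isogeny'' does not rule out reaching a $j=1728$ curve through a $j=287496$ vertex. What you need is the following:
\begin{itemize}
\item every $\Q$-isogeny is $[n]$ followed by a cyclic $\Q$-isogeny;
\item a cyclic isogeny $E_{-t^2}\to E'$ with $j(E')=1728$ has kernel $E[\lambda]$ for some primitive $\lambda\in\mathbb Z[i]$;
\item Galois stability (complex conjugation sends $[\lambda]$ to $[\bar\lambda]$) forces $(\lambda)=(\bar\lambda)$, hence $(\lambda)=(1+i)$ and the kernel is $\{O,(0,0)\}$.
\end{itemize}
Hence $E'\cong y^2=x^3+4t^2x$, whose discriminant $-2^{12}t^6$ is not a square. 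Alternatively, simply quote the isogeny graph of $E_{-t^2}$ as the paper does.
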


\

\subsection*{Acknowledgements}
This project was suggested by Arman Zargar, so we would like to thank him for the initial questions that inspired this work. { We would also like to thank the referees for their many detailed comments
that have helped improve the paper.}

\

\section{Twists, isogenies and complex multiplication.}
In this section, we introduce some fundamental concepts from the theory of elliptic curves that will allow us to establish arithmetic criteria for determining whether the discriminant of an elliptic curve defined over \( \mathbb{Q} \) is a square (see Proposition~\ref{prop_delta}). At the end of this section, we will provide a proof of Theorem ~\ref{thm_CM}.

\subsection{Twists}\label{sec:twists} 
Let $E:y^2=x^3+Ax+B$ be an elliptic curve defined over $\Q$. We denote by $\Delta(E)$ and by $j(E)$ its discriminant and $j$-invariant, respectively:
$$
\Delta(E)=-16 \left(4 A^3+27 B^2\right)\quad\mbox{and}\quad j(E)=-1728\frac{(4A)^3}{\Delta(E)}.
$$
The $j$-invariant of $E$ is an invariant of the $\Qbar$-isomorphism class of the curve, and it does not depend on the particular equation chosen (cf. \cite[III Prop.1.4]{silverman}). Moreover, any elliptic curve defined over $\Q$ isomorphic over $\Qbar$ to $E$ has a short Weierstrass model of the form:
$$
\begin{array}{lclcl}
\mbox{(i)} & & E^d:y^2=x^3+d^2Ax+d^3B & & \mbox{if $j(E)\ne 0,1728$,} \\
\mbox{(ii)} & & E^d:y^2=x^3+d Ax & & \mbox{if $j(E) =1728$,}\\
\mbox{(iii)} & & E^d:y^2=x^3+d B & & \mbox{if $j(E) = 0$,}
\end{array}
$$ 
where $d$ is an integer in $\Q^*/(\Q^*)^{n(E)}$ and $n(E)=2$ (resp. $4$ or $6$) if $j(E)\ne 0,1728$ (resp. $j(E)= 1728$ or $j(E)= 0$) (cf. \cite[X \S 5]{silverman}). The elliptic curve $E^d$ is called the $d$-twist of $E$, and in the particular case $j(E)\ne 0,1728$ it is called the $d$-quadratic twist of $E$. Note that we have

$$
\Delta(E^d)=\left\{
\begin{array}{ccl}
d^6 \Delta(E) & & \mbox{if $j(E)\ne 0,1728$,} \\
d^3 \Delta(E) & & \mbox{if $j(E) =1728$,}\\
d^2 \Delta(E) &  & \mbox{if $j(E) = 0$.}
\end{array}
\right.
$$

{
The following result plays a key role in the proofs of Theorems~\ref{mainthm1}, \ref{mainthm2}, and~\ref{thm_CM}. 
Although it is elementary and possibly well-known, I have not been able to locate 
a precise reference in the literature; hence, its proof is included here.
}
\begin{proposition}\label{prop_delta}
Let $E$ be an elliptic curve defined over $\Q$. Then $\sqrt{\Delta(E)}\in \Q$ if and only 
\begin{itemize}
\item $j(E)=t^2+1728$, for some $t\in\Q^*$, or
\item $j(E)=1728$ and $E$ is $\Q$-isomorphic to $y^2=x^3-s^2 x$, for some $s\in \Q^*$. 
 \end{itemize}
\end{proposition}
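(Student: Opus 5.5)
The plan is to work with a short Weierstrass model $E:y^2=x^3+Ax+B$ and use the identity
$$
j(E)-1728=-1728\frac{(4A)^3}{\Delta(E)}-1728=\frac{-1728\bigl(64A^3+\Delta(E)\bigr)}{\Delta(E)}=\frac{-1728\cdot 16\bigl(4A^3-4A^3-27B^2\bigr)}{\Delta(E)}=\frac{2^{10}3^6 B^2}{\Delta(E)}.
$$
Thus, whenever $B\neq 0$, we get $j(E)-1728\equiv \Delta(E)$ modulo $(\Q^*)^2$, since $2^{10}3^6B^2$ is a nonzero square. We then split according to whether $B=0$, which is exactly the case $j(E)=1728$.

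\textbf{Case $j(E)\ne 1728$.} Here $B\ne 0$, and $\Delta(E)\ne 0$ because $E$ is an elliptic curve. The identity shows that $\Delta(E)$ is a square if and only if $j(E)-1728$ is a nonzero square, that is, $j(E)=t^2+1728$ with $t\in\Q^*$. Conversely, if $j(E)=t^2+1728$ with $t\neq0$, then $j(E)\neq 1728$, so $B\ne0$ and the same identity gives that $\Delta(E)$ is a square. The condition $t\neq 0$ is what keeps this case separate from $j=1728$. The case $j(E)=0$ (so $A=0$) needs no special treatment, since the identity holds for all $B\ne0$.

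\textbf{Case $j(E)=1728$.} Now $B=0$, and $E$ is $y^2=x^3+Ax$ with $\Delta(E)=-64A^3$. This is a square if and only if $-A$ is a square, i.e.\ $A=-s^2$ with $s\in\Q^*$. Any $\Q$-isomorphism rescales $A$ by $u^4$, which preserves the class of $-A$ modulo squares, so the condition is well defined, consistent with the invariance of the square class of $\Delta$ noted in the introduction. Hence $\sqrt{\Delta(E)}\in\Q$ if and only if $E$ is $\Q$-isomorphic to $y^2=x^3-s^2x$. Finally, the first bullet cannot accidentally cover this case, because it requires $t\neq0$.

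The argument is essentially one computation, so I expect no real obstacle. The only points needing care are the sign bookkeeping in the displayed identity and confirming that $2^{10}3^6$ is a square ($2^{10}=(2^5)^2$ and $3^6=(3^3)^2$, so it is).
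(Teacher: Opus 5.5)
Your proof is correct, and it takes a more direct route than the paper's.

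The paper argues through twists. For $j_0\neq 0,1728$ it writes down a reference model $E_{j_0}$ with $\Delta(E_{j_0})=2^{12}3^{12}j_0^2/(j_0-1728)^3$. It then uses that $E$ is a quadratic twist $E_{j_0}^d$, so $\Delta(E)=d^6\Delta(E_{j_0})\equiv j_0-1728$ modulo $(\Q^*)^2$. The case $j=0$ is handled separately: $E$ is a sextic twist of $y^2=x^3+1$, so $\Delta(E)\equiv -3$, which is never a square. The case $j=1728$ is also separate: $E$ is a quartic twist of $y^2=x^3+x$, so $\Delta(E)\equiv -d$.

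Your identity $(j(E)-1728)\,\Delta(E)=(2^5 3^3 B)^2$ holds for any short model and gives the same square-class relation at once, with no twist classification and no reference model. It also absorbs $j=0$ automatically, since $A=0$ forces $B\neq 0$ and $-1728$ is not a square. So the only split you need is $B=0$, which you correctly identify with $j=1728$; your treatment of that case matches the paper's.

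What the paper's route buys is consistency with the twist framework of Section 2.1. It expresses the square class through the twist parameter $d$, which is how the $j=1728$ condition ($d=-s^2$) is naturally phrased. Your route is more self-contained and treats every $j\neq 1728$ uniformly.
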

\begin{proof}
First assume $j_0=j(E)$, with $j_0\ne 0,1728$, and consider the elliptic curve
$$
E_{j_0}\,:\,y^2=x^3-27 \frac{j_0}{j_0-1728}x+54\frac{j_0}{j_0-1728}.
$$
A straightforward calculation yields 
$$
j(E_{j_0})=j_0\qquad \mbox{and}\qquad \Delta(E_{j_0})=2^{12}\cdot 3^{12}\frac{j_0^2}{(j_0-1728)^3}.
$$
Therefore there exists $d\in \Q^*/(\Q^*)^{2}$ such that $E$ is $\Q$-isomorphic to $E_{j_0}^d$. In particular, 
$$
\Delta(E)=\Delta(E_{j_0}^d)\equiv (j_0-1728)\modQ.
$$
Thus, $\sqrt{\Delta(E)}\in\Q$ if and only if $j(E)=1728+t^2$ for some $t\in \Q$. In particular $t\ne 0$ since $j(E)\ne 1728$.

Now assume $j(E)=0$ and consider the elliptic curve $E_0\,:\,y^2=x^3+1$ that satisfies $j(E_0)=0$ and $\Delta(E_0)=-3\cdot 12^2$. That is, $E$ is $\Q$-isomorphic to $E_{0}^d$ for some $d\in \Q^*/(\Q^*)^{6}$. In particular
$$
\Delta(E)=\Delta(E_{0}^d)\equiv -3\modQ.
$$
This proves $\sqrt{\Delta(E)}\notin\Q$ if $j(E)=0$.

Finally assume $j(E)=1728$ and consider the elliptic curve $E_{1728}\,:\,y^2=x^3+x$ that satisfies $j(E_{1728})=1728$ and $\Delta(E_{1728})=-8^2$. That is, $E$ is $\Q$-isomorphic to $E_{1728}^d$ for some $d\in \Q^*/(\Q^*)^{4}$. In particular
$$
\Delta(E)=\Delta(E_{1728}^d)\equiv -d \modQ.
$$
Therefore, $\sqrt{\Delta(E)}\in\Q$ if and only if $-d=s^2$ for some $s\in \Q^*$.
\end{proof}

\subsection{Isogenies over $\Q$.}
Let $E$ be an elliptic curve defined over $\Q$ and let $N\in\mathbb Z$, $N>1$. We say that $E$ admits a (rational) $N$-isogeny if it admits a (cyclic) isogeny $\phi$, of degree $N$, such that ${}^\sigma \phi=\phi$ for any $\sigma \in \mathrm{Gal}(\Qbar/\Q)$. Equivalently, $E$ has a rational (cyclic) subgroup of order $N$.

The complete classification of rational isogenies was completed due to work by Fricke, Kenku, Klein, Kubert, Ligozat, Mazur  and Ogg, among others (see the summary in \cite[Table 1 and 2]{lozano}):
	
\begin{thm}\label{thm-isog} Let $E$ and $E'$ be two $N$-isogenous elliptic curves defined over $\Q$. Then
\begin{itemize}
\item[(i)] $N\in\{2,3,4,5,6,7,8,9,10,12,13, 16,18,25\}$. In this case, there are infinitely many isomorphism classes and Table \ref{table1} (in the Appendix) shows the $j$-invariants of $E$ and $E'$, or
\item[(ii)] $N\in\{11,14,15,17,19,21,27,37,43,67,163\}$. In this case, there are finitely many isomorphism classes and Table \ref{table2} (in the Appendix) shows the $j$-invariants of $E$ and $E'$.
\end{itemize} 
\end{thm}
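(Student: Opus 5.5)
Since this is a classical theorem, I would not re-derive it from scratch. I would assemble it from the literature, organized around the modular curves $X_0(N)$. The basic dictionary is this: a rational cyclic $N$-isogeny $\phi\colon E\to E'$ is the same as a non-cuspidal point of $X_0(N)(\Q)$, namely the pair $(E,\ker\phi)$ with $\ker\phi$ a $\mathrm{Gal}(\Qbar/\Q)$-stable cyclic subgroup of order $N$. The Atkin--Lehner involution $w_N$ sends this point to $(E',E[N]/\ker\phi)$. So $j(E)$ and $j(E')$ are the images of a rational point $P$ and of $w_N(P)$ under the $j$-map $X_0(N)\to X(1)$. The theorem therefore amounts to determining, for each $N>1$, whether $X_0(N)(\Q)$ has infinitely many, finitely many, or no non-cuspidal points. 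In the finite case one must also list them.

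First, the infinite case (i). $X_0(N)$ has genus $0$ exactly for $N\in\{1,\dots,10,12,13,16,18,25\}$. In each of these cases $X_0(N)$ has a rational cusp, so $X_0(N)\cong\mathbb P^1_\Q$. I would choose a Hauptmodul $t$ (for instance an eta-quotient, as Fricke and Klein did) and write $j=J_N(t)$ and $j\circ w_N=J_N(t')$, where $t'$ is a M\"obius transform of $t$. These formulas fill Table~\ref{table1}. The only thing to check here is the explicit rational functions, which is a finite computation with $q$-expansions.

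Next, genus $\geq1$. I would split by the rank of $J_0(N)(\Q)$.

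\begin{itemize}
\item \emph{Prime $N$.} Mazur's theorem on rational isogenies of prime degree handles this by using the Eisenstein quotient, whose Mordell--Weil group is finite, and a formal immersion argument. It leaves only $N\in\{11,17,19,37,43,67,163\}$ with non-cuspidal rational points:
 \begin{itemize}
 \item for $43,67,163$, the points come from CM curves with class number one;
 \item for $11,17,19,37$, they are the known sporadic points.
 \end{itemize}
\item \emph{Composite $N$.} Kenku's series of papers, together with Ligozat and Ogg, treats the remaining $N$, using the divisors of $N$. If $X_0(N)(\Q)$ has a non-cuspidal point, then so does $X_0(M)(\Q)$ for every $M\mid N$. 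So it suffices to examine the minimal composite levels not already excluded, such as $14,15,21,27$ and their multiples like $28,30,\dots$, together with the finitely many $N$ whose prime divisors all lie in the allowed list.
 \begin{itemize}
 \item For the elliptic cases ($X_0(11),X_0(14),X_0(15),X_0(17),X_0(19),X_0(21),X_0(27)$), the group $J_0(N)(\Q)=X_0(N)(\Q)$ is finite and computable. Removing the cusps leaves exactly the listed $j$-invariants.
 \item The remaining cases are $N=37$, which is hyperelliptic of genus $2$, and the higher levels such as $39,65,125,169$. For these one uses a rank-zero quotient or an Atkin--Lehner quotient, together with Chabauty-type or formal immersion arguments.
 \end{itemize}
\end{itemize}

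Finally, I would compute $j(P)$ and $j(w_NP)$ for each surviving rational point to obtain Table~\ref{table2}. The main obstacle is clearly the non-existence part: showing there are no non-cuspidal points at prime levels beyond the list (Mazur) and at the harder composite levels (Kenku, especially $N=125$ and $169$). I would simply cite these results, as summarized in \cite[Tables 1 and 2]{lozano}, rather than reprove them.
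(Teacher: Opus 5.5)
Your proposal is correct and takes the same route as the paper: the paper does not reprove this classical classification either, but cites the work of Fricke, Klein, Mazur, Kenku, Ligozat, Ogg and others via the summary in \cite[Tables 1 and 2]{lozano}, and your $X_0(N)$/Atkin--Lehner outline is an accurate roadmap of that literature. One small factual slip: the non-cuspidal rational point on $X_0(19)$, and one of the three on $X_0(11)$ (namely $j=-2^{15}$), are class-number-one CM points like those at levels $43,67,163$, not non-CM ``sporadic'' points, as Table~\ref{table2} records.
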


Given a $\Q$-isogeny class of elliptic curves defined over $\Q$, its isogeny graph consists of a vertex for each elliptic curve in the isogeny class and an edge for each rational isogeny of prime degree between elliptic curves in the isogeny class, with the degree recorded as a label on the edge. The isogeny graphs of elliptic curves over $\Q$ first appeared in the so-called Antwerp tables \cite{antwerp}. Although the first proof (in press) seems to be due to Chiloyan and Lozano-Robledo \cite[Theorem 1.2]{CLR}. 

\subsection{Complex multiplication over $\Q$}
It is well known that there are $13$ isomorphism classes of elliptic curves defined over $\Q$ with complex multiplication (cf. \cite[A \S 3]{silverman2}). The following list gives these thirteen $j$-invariants:
$$
\mathcal{CM}=\left\{\begin{array}{c}
0,\quad\,
2^4\cdot 3^3\cdot 5^3,\quad\,
-2^{15}\cdot  3\cdot  5^3,\quad\,
2^6\cdot  3^3,\quad\,
2^3\cdot  3^3\cdot  11^3,\quad\,
-3^3\cdot  5^3,\quad\,
 3^3\cdot  5^3\cdot  17^3,\quad\,\\
 \!\! 2^6\cdot  5^3,\quad\,
 -2^{15},\quad\,
-2^{15}\cdot  3^3,\,\,
 -2^{18}\cdot  3^3\cdot  5^3,\,\,\,
 -2^{15}\cdot  3^3\cdot  5^3\cdot  11^3,\,\,\,\,\,
 -2^{18}\cdot  3^3\cdot  5^3\cdot  23^3\cdot  29^3
 \end{array} \!\! \right\}.
 $$

\subsubsection*{Proof of Theorem \ref{thm_CM}.}
Assume that $j(E)\in \mathcal{CM}\setminus\{1728\}$. 
Then a straightforward computation shows that $j_0-1728$ is not a square. By Proposition \ref{prop_delta} we have $\sqrt{\Delta(E)}\notin \Q$. Now if $j(E)=1728$, Proposition \ref{prop_delta} shows that $\sqrt{\Delta(E)}\in \Q$ if and only if $E$ is $\Q$-isomorphic to $E_{-t^2}:y^2=x^3-t^2x$ for some $t\in \Q^*$. Finally the isogeny graph associated to the $\Q$-isogeny class of $E_{-t^2}$ is (cf. \cite[Table 5]{CLR}):  
$$
\xymatrix{
& E_{4t^2} \ar@{-}[d]^2&  \\
& \fbox{$ E_{-t^2}$} \ar@{-}[dr]^2 \ar@{-}[dl]_2& \\
 E'_{-t} &  & E'_t }
$$
where 
 $E_{4t^2}:y^2=x^3+4t^2x$ and $E'_{\pm t}:y^2=x^3-11t^2x\pm 14t^3$. Note that $j(E_{4t^2})=1728$ and $j(E_{\pm t'})=2^3\cdot  3^3\cdot  11^3\in\mathcal{CM}$. We conclude the proof since $\Delta(E_{4t^2}) = -2^{12} t^6$ and $\Delta(E'_{\pm t}) = 2^9 t^6$ are not perfect squares.

\section{Proof of Theorem \ref{mainthm1} and \ref{mainthm2}}

{ The property of having complex multiplication is preserved under isogenies. In other words, if \(E\) and \(E'\) are isogenous elliptic curves, then either both have CM or neither does.} The case of complex multiplication has been addressed in Theorem \ref{thm_CM}. Therefore, from this point onward, we assume that all elliptic curves under consideration do not have complex multiplication. For the remainder of the proof, let \( E \) and \( E' \) be two \( N \)-isogenous elliptic curves defined over \( \mathbb{Q} \), both without complex multiplication. In particular, \( N \) does not belong to \(\{ 14, 19, 27,43, 67, 163\}\) (cf. Table \ref{table2} in the Appendix).

{
\begin{lemma}
Let $E$ be an elliptic curve defined over $\Q$. If $E$ admits an $N$-isogeny, with $N\in  \{11, 15,$ $ 17, 21,  37\}$, then \( \sqrt{\Delta(E)} \notin \mathbb{Q} \).
\end{lemma}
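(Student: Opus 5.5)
The plan is to reduce the statement to a finite check through Theorem~\ref{thm-isog}(ii) and Proposition~\ref{prop_delta}. For $N\in\{11,15,17,21,37\}$, Theorem~\ref{thm-isog}(ii) says there are only finitely many $\Qbar$-isomorphism classes of elliptic curves over $\Q$ admitting an $N$-isogeny. Their $j$-invariants are listed explicitly in Table~\ref{table2}:
\begin{itemize}
\item three values for $N=11$, one of which ($j=-2^{15}$) is CM and already excluded;
\item four values for $N=15$;
\item two values for $N=17$;
\item four values for $N=21$;
\item two values for $N=37$.
\end{itemize}
Every curve in these classes comes in isogenous pairs, and both $j$ and $j'$ appear in the table. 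So it suffices to examine a finite list of rational numbers $j_0$.

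For each such $j_0$ we need $j_0\ne 0,1728$. This holds because $0$ and $1728$ are CM $j$-invariants, and the curves under consideration are non-CM (any CM entry, such as $-2^{15}$ for $N=11$, is handled by Theorem~\ref{thm_CM}). Proposition~\ref{prop_delta} then shows that $\sqrt{\Delta(E)}\in\Q$ if and only if $j_0-1728$ is a nonzero rational square. This criterion depends only on $j_0$, not on the quadratic twist, so every twist in the class is covered at once. The lemma therefore reduces to checking that $j_0-1728\notin(\Q^*)^2$ for each listed $j_0$.

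Carrying out the check: write $j_0-1728=a/b$ in lowest terms. The number is a rational square exactly when $ab$ is a perfect square; in particular it must be positive.
\begin{itemize}
\item Many entries are negative, for example $j=-11^2$, $-11\cdot131^3$, $-5^2/2$, $-17^2\cdot101^3/2$ and $-7\cdot11^3$. Subtracting $1728$ gives a negative number, so these are handled immediately.
\item For the positive ones (such as $-5^2\cdot241^3/2^3$ with its sign, $5^4/2^{\cdot}$-type entries, the $N=21$ values like $3^3\cdot5^3/2$ and $3^3\cdot5^3\cdot383^3/2^7$, and the $N=37$ values), compute $j_0-1728$ explicitly. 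Then exhibit a prime appearing to an odd power in its numerator or denominator; equivalently, find a prime $p$ modulo which the number is a non-residue.
\end{itemize}

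The main obstacle is purely computational. The $N=37$ $j$-invariants ($-7\cdot 11^3$ and $-7\cdot137^3\cdot2083^3$) and some $N=15,21$ values have large numerators, so factoring $j_0-1728$ by hand is tedious. I would do it in a computer algebra system (Magma or Sage) and record, for each $j_0$, either its negativity or a witness prime with odd valuation. A denominator that is an odd power of $2$, as in $j=-5^2/2$ or $3^3\cdot5^3/2$, already suffices, since the numerator of $j_0-1728$ stays odd.
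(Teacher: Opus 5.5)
Your proposal is correct and is essentially the paper's proof. Theorem~\ref{thm-isog}(ii) reduces the lemma to the finite list in Table~\ref{table2}, and Proposition~\ref{prop_delta} reduces it further to checking $j_0-1728\notin(\Q^*)^2$ for each listed $j_0$. Your second bullet, however, misreads several signs in the table. In fact $-5^2\cdot241^3/2^3$, $-3^3\cdot5^3\cdot383^3/2^7$ and both $N=37$ values are negative. The only positive entries are $5\cdot211^3/2^{15}\approx 1433.4$ and $3^3\cdot5^3/2=1687.5$, and both are below $1728$. So $j_0-1728<0$ for every entry, and no factoring or witness prime is needed.
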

\begin{proof}
Let \( J_N \) denote the set of \( j \)-invariants of elliptic curves over \( \mathbb{Q} \) that admit an \( N \)-isogeny. By Theorem \ref{thm-isog}, these sets are finite, and they are listed in Table \ref{table2} (in the Appendix). Let \( j_0=j(E) \in J_N \). A straightforward computation then shows that \( j_0 - 1728$ is not a square. From Proposition \ref{prop_delta}, it follows that \( \sqrt{\Delta(E)} \notin \mathbb{Q} \).
\end{proof}
}

{
Let \(N \in \{2,3,4,5,6,7,8,9,10,12,13,16,18,25\}\),  and let \(E, E'\) be \(N\)-isogenous elliptic curves defined over $\Q$. Then there exists \(h \in \mathbb{Q}\) such that $j(E)=j_N(h)$ and $j(E')=j_N'(h)$ (see Table~\ref{table1} in the Appendix). 

Let \( F_N(h), G_N(h) \in \mathbb{Q}[h] \) be the polynomials defined in Table \ref{conditions}. Then we have 
$$
 F_N(h) \equiv (j_N(h) - 1728) \modQh \qquad\text{and}\qquad  G_N(h) \equiv (j'_N(h) - 1728) \modQh.
 $$}
 {
We define the curves
\[
C_N : y^2 = F_N(h) \quad \text{and} \quad X_N : \left\{ (h,y,z) \,:\, y^2 = F_N(h), z^2 = G_N(h) \right\},
\]
}
and the sets
\[
\begin{array}{l}
C^*_N(\mathbb{Q}) := \{ h \, : \, (h, y) \in C_N(\mathbb{Q}) \text{ and } j_N(h) \neq \infty \}, \\[1mm]
X^*_N(\mathbb{Q}) := \{ h \, : \, (h, y, z) \in X_N(\mathbb{Q}) \text{ and } j_N(h), j'_N(h) \neq \infty \}.
\end{array}
\]
{
By Proposition \ref{prop_delta}, the curve \( C_N \) parametrizes elliptic curves that admit an \( N \)-isogeny and have square discriminants, whereas the curve \( X_N \) parametrizes pairs of \( N \)-isogenous elliptic curves with square discriminants. In particular:
\begin{itemize}
    \item There exists an elliptic curve \(E\) defined over \(\Q\) that admits an \(N\)-isogeny with \(\sqrt{\Delta(E)} \in \Q\) if and only if \(C^*_N(\Q)\neq\emptyset\).
    \item There exist two \(N\)-isogenous elliptic curves \(E\) and \(E'\) defined over \(\Q\) such that $\sqrt{\Delta(E)},$ $\sqrt{\Delta(E')} \in \Q\) if and only if \(X^*_N(\Q)\neq\emptyset\).
\end{itemize}
}

\begin{table}
\begin{tabular}{llll}
\toprule
$N$ & $F_N(h)$  && $G_N(h)$ \\
\toprule 
$2$ & $h(h+64)$ && $h+64$ \\
\midrule
$3$ & $h$ &&  $h$ \\
\midrule
$4$ & $h(h+16)$ && $h+16$ \\
\midrule
$5$ & $h(h^2+22h+125)$ && $h(h^2+22h+125)$ \\
\midrule
$6$ & $h(h+8)$ && $h+9$ \\
\midrule
$7$ & $h$ &&  $h$ \\
\midrule
$8$ & $h^2-16$ && $h$ \\
\midrule
$9$ & $h^3-27$ && $h^3-27$ \\
\midrule
$10$ & $h(h-4)(h^2+4)$ && $(h+1)(h^2+4)$ \\
\midrule
$12$ & $(h^2-1)(h^2-9)$ && $h(h-1)(h+3)$ \\
\midrule
$13$ & $h(h^2+6h+13)$ && $h(h^2+6h+13)$ \\
\midrule
$16$ & $(h^2-4)(h^2+4)$ && $h(h^2+4)$ \\
\midrule
$18$ & $h(h^3-8)$ && $h^3+1$ \\
\midrule
$25$ & $(h-1) \left(h^2+4\right) \left(h^4+h^3+6 h^2+6 h+11\right)$ && $(h-1) \left(h^2+4\right) \left(h^4+h^3+6 h^2+6 h+11\right)$\\
\bottomrule
\end{tabular}
\caption{}\label{conditions}
\end{table}

{
\noindent {\bf Remark.} 
Based on results of Rouse–Zureick-Brown \cite{RZB} and Zywina \cite{zywina}, 
Morrow \cite{morrow} constructed models of composite level modular curves whose rational points 
parametrize elliptic curves over $\Q$ with simultaneously non-surjective composite Galois images. Some of the curves obtained in his work turn out to be equivalent to some of those arising in the present article, 
although they appear here in a completely different context.
}

\subsection*{Proof of Theorem \ref{mainthm1}.}

First, we observe that our investigation is restricted to the set of values \( N \in \{2, 3, 4, 5, 6, 7, 8, 9, 10, 12, 13, 16, 18, 25\} \). Table \ref{CN} provides essential information regarding the curve \( C_N \) for each specified \( N \).

\begin{table}[ht]
\begin{tabular}{ccclc}
\toprule
$N$ & $g(C_N)$  & $h$  & Rational points & LMFDB\\
\toprule %
$2$ & $0$ & $16\frac{t^2}{t+1}$  &    &\\
\midrule
$3$ &  $0$ &$t^2$ &     &\\
\midrule
$4$ &  $0$ &$4\frac{t^2}{t+1}$ &   &\\
\midrule
$5$ & $1$ & $-$ & $C_5(\Q)=\{(0,0)\}$ & \eclabel{20.a2}  \\
\midrule
$6$ & $0$ & $2\frac{t^2}{t+1}$ &  & \\
\midrule
$7$ &  $0$ &$t^2$ & & \\
\midrule
$8$ & $0$ & $2\frac{t^2+2t+2}{t+1}$ &  &\\
\midrule
$9$ & $1$ & $-$ & $C_9(\Q)=\{(3,0)\}$ & \eclabel{36.a3}  \\
\midrule
$10$ & $1$ & $-$ & $C_{10}(\Q)=\{(4, 0),  (0, 0), (-1,\pm 5)\}$ & \eclabel{20.a4}  \\
\midrule
$12$ & $1$ & $-$ & $C_{12}(\Q)=\{(0, \pm 3), (\pm 3, 0), (\pm 1, 0)\}\qquad$ &  \eclabel{24.a4}  \\
\midrule
$13$ & $1$ & $-$ & $C_{13}(\Q)=\{(0, 0)\}$ &   \eclabel{52.a2}  \\
\midrule
$16$ & $1$ & $-$ & $C_{16}(\Q)=\{(\pm 2, 0)\}$ &  \eclabel{32.a4}  \\
\midrule
$18$ & $1$ & $-$ & $C_{18}(\Q)=\{(0, 0), (2, 0),(-1, \pm 3)  \}$ & \eclabel{36.a4}  \\
\midrule
$25$ & $3$ & $-$ & $C_{25}(\Q)=\{(1,0) \}$ &   \\ 
\bottomrule
\end{tabular}
\caption{Data on the curve $C_N$}\label{CN}
\end{table}

{
Table \ref{CN} description: For each integer \( N \) in the set  \(\{2, 3, 4, 5, 6, 7, 8, 9, 10, 12, 13, 16, 18, 25\}\) in the first column,  the second column lists the genus of the curve \( C_N \). The third column provides the parametrization of \( h \) obtained from \( \phi_N \) in cases where the genus is \( 0 \); otherwise, it is marked as \lq\lq\( - \)\rq\rq. The fourth column displays the set \(C_N(\mathbb{Q})\) in the cases where the genus of \(C_N\) is greater than \(0\).  Finally, the last column provides the corresponding LMFDB label when \(C_N\) is an elliptic curve.
}

\

{\bf The case of $\text{genus}(C_N)=0$}. The curve $C_N\,:\,y^2=F_N(h)$ has genus $0$ for $N\in\{2,3,4,6,7,8\}$. Since $(0,0)\in C_N(\Q)$ for $N\in\{2,3,4,6,7\}$ and $(4,0)\in C_8(\Q)$, we can compute a parametrization $\phi_N\colon\mathbb A^1(\Q)\longrightarrow C_N(\Q)$.

For example for $N=2$, we have $C_2:y^2=h(h+64)$ and $\phi_2(t)=\left(\frac{16 t^2}{t+1},\frac{16 t (t+2)}{t+1}\right)$. Therefore
$$ 
C^*_2(\Q)=\left\{\frac{16 t^2}{t+1}\,:\, t\in\Q\right\}.
$$
 The cases for $N\in\{3,4,6,7,8\}$ are analogous, and the complete information appears in Table \ref{CN}.
 
\

{\bf The case of \(\text{genus}(C_N) = 1\)}. The curve \( C_N \colon y^2 = F_N(h) \) has genus 1 for $ N \in \{5, 9, 10, 12, 13, 16,$ $18\}$. Our objective is to compute the affine \(\mathbb{Q}\)-rational points on \( C_N \) to demonstrate that they correspond to the cusps of the map \( j_N(h) \).

Consider, for instance, the case \( N = 5 \). The curve parametrizing elliptic curves admitting a 5-isogeny with a square discriminant is given by:
\[
C_{5} \colon y^2 = h(h^2 + 22h + 125).
\]
This is an elliptic curve defined over \(\mathbb{Q}\), with the point \((0,0) \in C_5(\mathbb{Q})\). Its LMFDB label is \eclabel{20.a2} (see \cite{lmfdb}). The Mordell--Weil group of this elliptic curve is isomorphic to \(\mathbb{Z}/2\mathbb{Z}\), indicating that \( C_{5}(\mathbb{Q}) = \{(0,0)\} \).

Consequently, we have established that if \( E \) is an elliptic curve admitting a 5-isogeny over \(\mathbb{Q}\), then \( \sqrt{\Delta(E)} \notin \mathbb{Q} \).

The cases for \( N \in \{9, 10, 12, 13, 16, 18\} \) are analogous, and comprehensive information is provided in Table \ref{CN}.

\

{\bf The case of \(\text{genus}(C_N) > 1\)}. There exists only one curve \( C_N \) with genus greater than $1$, specifically \( C_{25} \). We can assert that \(C_{25}(\mathbb{Q}) = \{(1,0)\}\) and \(C^*_{25}(\mathbb{Q}) = \emptyset\), since we proved above that if 
\(E\) is an elliptic curve admitting a \(5\)-isogeny over \(\mathbb{Q}\), then \(\sqrt{\Delta(E)} \notin \mathbb{Q}\).  
This follows from the fact that if \(E\) admits a \(25\)-isogeny, then it necessarily admits a \(5\)-isogeny.

\subsection*{Proof of Theorem \ref{mainthm2}.}

In the first place, observe that, as a consequence of Theorem  \ref{mainthm1} , it suffices to study the cases where $N$ belongs to the set $N\in\{2,3,4,6,7,8\}$. Table \ref{XN} provides essential information regarding the curve \( X_N \) for each specified \( N \). Notably, the genus of \( X_N \) satisfies \( \text{genus}(X_N) \leq 1 \).

{\bf The case of $\text{genus}(X_N)=0$}. This case is analogous to the genus \( 0 \) case for the curve \( C_N \). The curve \( X_N \) has genus \( 0 \) for \( N \in \{2,3,4,7\} \). Since \( (0,0,0) \in X_N(\mathbb{Q}) \) for \( N \in \{3,7\} \), \( (0,0,8) \in X_2(\mathbb{Q}) \), and \( (0,0,4) \in X_4(\mathbb{Q}) \), we can compute a parametrization  
\[
\psi_N \colon\mathbb{A}^1(\mathbb{Q}) \longrightarrow X_N(\mathbb{Q}).
\]
For example, for \( N = 2 \), we have  
\[
X_2 : \{ (h,y,z)\,:\,y^2 = h(h+64), \quad z^2 = h+64 \},
\]  
and the parametrization  
\[
\psi_2(t) = \left( \left( 4 \frac{t^2-1}{t} \right)^2, 16 \frac{t^4-1}{t^2}, 4 \frac{t^2+1}{t} \right).
\]
Therefore,  
\[
X^*_2(\mathbb{Q}) = \left\{ \left( 4 \frac{t^2-1}{t} \right)^2 \,:\, t \in \mathbb{Q} \right\}.
\]
The cases for \( N \in \{3,4,7\} \) are analogous, and the complete details can be found in Table \ref{XN}.

{\bf The case of $\text{genus}(X_N)=1$}. This case is analogous to the genus \( 1 \) case for the curve \( C_N \). The curve \( X_N \) has genus \( 1 \) for \( N \in \{6,8\} \). Our goal is to compute the affine \( \mathbb{Q} \)-rational points on \( X_N \) to show that they correspond to cusps of the maps \( j_N(h) \) or \( j'_N(h) \).

Let us analyze the case \( N = 6 \) in detail. The curve \( X_6 \) is an elliptic curve defined over \( \mathbb{Q} \), as it is a genus $1$ curve containing the rational point \( (0,0,3) \in X_6(\mathbb{Q}) \). Its LMFDB label is \eclabel{24.a4}. The Mordell-Weil group of this elliptic curve is isomorphic to \( \mathbb{Z}/2\mathbb{Z} \oplus \mathbb{Z}/4\mathbb{Z} \). Therefore, we have  
\[
X_6(\mathbb{Q}) = \{ (-9, \pm 3, 0), (-8, 0, \pm 1), (0, 0, \pm 3) \}.
\]
Thus, we have proven that if \( E \) and \( E' \) form a pair of \( 6 \)-isogenous elliptic curves over \( \mathbb{Q} \), then \( \sqrt{\Delta(E)}, \sqrt{\Delta(E')} \notin \mathbb{Q} \).

The case \( N = 8 \) is analogous, and the complete details can be found in Table \ref{XN}.

\

\begin{table}[ht!]
\begin{tabular}{ccclc}
$N$ & $g(X_N)$  & $h$  & Rational Points & LMFDB\\
\toprule %
$2$ & $0$ & $\left(4\frac{t^2-1}{t}\right)^2$  &  &  \\
\midrule
$3$ &  $0$ &$t^2$ &   &  \\
\midrule
$4$ &  $0$ &$\left(2\frac{t^2-1}{t}\right)^2$ &  & \\
\midrule
$6$ & $1$ & $-$ &   $X_{6}(\Q)=\{ (-9, \pm 3, 0), (-8, 0, \pm 1), (0, 0, \pm 3)\}$ & \eclabel{24.a4}  \\
\midrule
$7$ &  $0$ &$t^2$ &  & \\
\midrule
$8$ & $1$ & $-$ &  $X_{8}(\Q)=\{ (4, 0, \pm 2)\}$ &\eclabel{32.a4}  \\
\bottomrule
\end{tabular}
\caption{Data on the curve $X_N$}\label{XN}
\end{table}

{ Table \ref{XN} description: This table is analogous to Table \ref{CN}. For each integer \( N \) in the set  \(\{2, 3, 4, 6,7, 8\}\) in the first column,  the second column lists the genus of the curve \( X_N \). The third column provides the parametrization of \( h \) obtained from \( \psi_N \) in cases where the genus is \( 0 \); otherwise, it is marked as \lq\lq\( -\)\rq\rq. The fourth column displays the set \(X_N(\mathbb{Q})\) in the cases where the genus of \(X_N\) is greater than \(0\).  Finally, the last column provides the corresponding LMFDB label when \(X_N\) is an elliptic curve.
}

\section*{Appendix: $j$-invariants such that $E$ and $E'$ are $N$-isogenous}
For the sake of completeness, we include in this appendix the \( j \)-invariants of pairs of isogenous elliptic curves over \( \mathbb{Q} \). These tables are primarily extracted from \cite{lozano}. Table \ref{table1} shows the \( j \)-map in cases where the modular curve \( X_0(N) \) has genus \( 0 \), while the remaining cases are listed in Table \ref{table2}.
\begin{table}[h]
\begin{tabular}{lll}
\toprule
$N$ & $j_N=j(E)$ & $j'_N=j(E')$ \\
\toprule
2 & $j_2(h)=\frac{(h+16)^3}{h}$ & $j'_2(h)=\frac{(h+256)^3}{h^2}$ \\
\midrule
3 & $j_3(h)=\frac{(h+27)(h+3)^3}{h}$ & $j'_3(h)=\frac{(h+27)(h+243)^3}{h^3}$ \\
\midrule
4 & $ j_4(h)=\frac{(h^2+16h+16)^3}{h(h+16)}$ & $j'_4(h)=\frac{(h^2+256h+4096)^3}{h^4(h+16)}$ \\
\midrule
5 & $ j_5(h)=\frac{(h^2+10h+5)^3}{h}$ & $j'_5(h)=\frac{(h^2+250h+5^5)^3}{h^5}$ \\
\midrule
6 & $j_6(h)=\frac{(h+6)^3(h^3+18h^2+84h+24)^3}{h(h+8)^3(h+9)^2}$ & $j'_{6}(h)=\frac{(h+12)^3(h^3+252h^2+3888h+15552)^3}{h^6(h+8)^2(h+9)^3}$   \\
\midrule
7 & $j_7(h)=\frac{(h^2+13h+49)(h^2+5h+1)^3}{h}$ & $j'_{7}(h)=\frac{(h^2+13h+49)(h^2+245h+2401)^3}{h^7}$ \\
\midrule
8 & $ j_8(h)=\frac{(h^4-16h^2+16)^3}{(h^2-16)h^2}$ & $j'_{8}(h)=\frac{(h^4 + 240h^3 + 2144h^2 + 3840h + 256)^3}{(h-4)^8h(h+4)^2}$ \\
\midrule
9 & $j_9(h)=\frac{h^3(h^3-24)^3}{h^3-27}$ & $j'_{9}(h)=\frac{(h+6)^3(h^3 + 234h^2 + 756h + 2160)^3}{(h-3)^8(h^3-27)}$ \\
\midrule
10 & $j_{10}(h)=\frac{(h^6-4h^5+16h+16)^3}{(h+1)^2(h-4)h^5}$ & $j'_{10}(h)=\frac{(h^6 + 236h^5 + 1440h^4 + 1920h^3 + 3840h^2 + 256h + 256)^3}{(h-4)^{10}h^2(h+1)^5}$\\
\midrule
\multirow{2}*{12} & \multicolumn{2}{l}{$j_{12}(h)=\frac{(h^2-3)^3(h^6-9h^4+3h^2-3)^3}{h^4(h^2-9)(h^2-1)^3}$} \\
\cmidrule(l){2-3} & \multicolumn{2}{l}{$j'_{12}(h)=\frac{(h^2 + 6h - 3)^3(h^6 + 234h^5 + 747h^4 + 540h^3 - 729h^2 - 486h - 243)^3}{(h-3)^{12}(h-1)h^3(h+1)^4(h+3)^3}$} \\
\midrule 
\multirow{2}*{13} & \multicolumn{2}{l}{$j_{13}(h)=\frac{(h^2+5h+13)(h^4+7h^3+20h^2+19h+1)^3}{h}$} \\
\cmidrule(l){2-3} & \multicolumn{2}{l}{$j'_{13}(h)=\frac{(h^2+5h+13)(h^4+247h^3+3380h^2+15379h+28561)^3}{h^{13}}$} \\
\midrule
\multirow{2}*{16} & \multicolumn{2}{l}{$j_{16}(h)=\frac{(h^8-16h^4+16)^3}{h^4(h^4-16)}$}\\ 
\cmidrule(l){2-3} & \multicolumn{2}{l}{$j'_{16}(h)=\frac{(h^8 + 240h^7 + 2160h^6 + 6720h^5 + 17504h^4 + 26880h^3 + 34560h^2+ 15360h + 256)^3}{(h-2)^{16}h(h+2)^4(h^2+4)}$} \\
\midrule
\multirow{2}*{18} & \multicolumn{2}{l}{$j_{18}(h)=\frac{(h^3-2)^3(h^9-6h^6-12h^3-8)^3}{h^9(h^3-8)(h^3+1)^2}$} \\
\cmidrule(l){2-3} & \multicolumn{2}{l}{$j'_{18}(h)=\frac{(h^3 + 6h^2 + 4)^3(h^9 + 234h^8 + 756h^7 + 2172h^6 + 1872h^5 + 3024h^4 + 48h^3 + 3744h^2 + 64)^3}{(h-2)^{18}h^2(h+1)^9(h^2-h+1)(h^2+2h+4)^2}$}  \\
\midrule
\multirow{2}*{25} & \multicolumn{2}{l}{$j_{25}(h)=\frac{(h^{10}+10h^8+35h^6-12h^5+50h^4-60h^3+25h^2-60h+16)^3}{h^5+5h^3+5h-11}$} \\ 
\cmidrule(l){2-3} & \multicolumn{2}{l}{$j'_{25}(h)=\frac{(h^{10} + 240h^9 + 2170h^8 + 8880h^7 + 34835h^6 + 83748h^5 +
206210h^4 + 313380h^3 + 503545h^2 + 424740h + 375376)^3}{(h-1)^{25}(h^4 + h^3 + 6h^2 + 6h + 11)}$} \\
\bottomrule
\end{tabular}
\caption{$j$-invariants such that $E$ and $E'$ are $N$-isogenous}\label{table1}
\end{table}

\begin{table}[h]
{
\begin{tabular}{llll}
\toprule
$N$ & $j_N=j(E)$ & $j'_N=j(E')$ & CM? \\
\toprule
 \multirow{2}{*}{$11$} & $-11\cdot 131^3$ & $-11^2$ & no \\
                                   & $-2^{15}$ & $-2^{15}$ & yes \\
\midrule
$14$  & $-3^3 \cdot 5^3$ & $3^3 \cdot 5^3 \cdot 17^3$ & yes \\
\midrule
 \multirow{2}{*}{$15$}  & $ -5^2/2$ & $  5 \cdot 211^3/2^{15}$ & no\\
  & $  -5^2 \cdot 241^3/2^3$ & $ -5 \cdot 29^3/2^5$ & no \\
\midrule
$17$  & $ -17^2 \cdot 101^3/2$ & $-17 \cdot 373^3/2^{17}$ & no \\
\midrule
$19$  & $-2^{15}\cdot 3^3$ & $-2^{15}\cdot 3^3$ & yes \\
\midrule
 \multirow{2}{*}{$21$}  & $-3^2 \cdot 5^6/2^3$ & $-3^3 \cdot 5^3\cdot 383^3/2^{7}$ & no\\
 & $3^3 \cdot 5^3/2$ & $ -3^2 \cdot 5^3\cdot 101^3/2^{21}$ & no\\
\midrule
$27$ & $ -2^{15} \cdot 3 \cdot 5^3$ & $ -2^{15} \cdot 3 \cdot 5^3$ & yes\\
\midrule
$37$  & $  -7 \cdot 11^3$ & $   -7 \cdot 137^3 \cdot 2083^3$  & no\\
\midrule
$43$ & $-2^{18} \cdot 3^3 \cdot 5^3$ & $-2^{18} \cdot 3^3 \cdot 5^3$ & yes \\
\midrule
$67$  & $ -2^{15}\cdot 3^3\cdot 5^3\cdot 11^3$ & $ -2^{15}\cdot 3^3\cdot 5^3\cdot 11^3$  & yes \\
\midrule
$163$  &$ -2^{18}\cdot 3^3\cdot 5^3\cdot 23^3\cdot 29^3\quad$ & $ -2^{18}\cdot 3^3\cdot 5^3\cdot 23^3\cdot 29^3\quad$ & yes  \\
\bottomrule
\end{tabular}
\caption{$j$-invariants such that $E$ and $E'$ are $N$-isogenous}\label{table2}
}
\end{table}

\end{document}